\newcommand\cyr{%
 \renewcommand\rmdefault{wncyr}%
 \renewcommand\sfdefault{wncyss}%
 \renewcommand\encodingdefault{OT2}%
\normalfont\selectfont} \DeclareTextFontCommand{\textcyr}{\cyr}
\newtheorem{theorem}{Theorem}
\newtheorem{corollary}[theorem]{Corollary}
\newtheorem{proposition}[theorem]{Proposition}
\newtheorem{remark}[theorem]{Remark}
\def\Z{\mathbb Z}
\def\mod{\operatorname{mod}}
\begin{document}

\title{\textbf{Maximal Operators Associated to Multiplicative Characters}}

\author{Allison Lewko\thanks{Work completed while supported by a Microsoft Research PhD Fellowship.} \and Mark Lewko \thanks{Supported by a NSF Postdoctoral Fellowship, DMS-1204206}}

%\author{Allison Lewko \thanks{Supported by a Microsoft Research PhD Fellowship.} \and Mark Lewko}

\date{}

\maketitle

\begin{abstract}We show that the natural analogue of the Carleson-Hunt inequality fails for multiplicative characters.
\end{abstract}

\section{Introduction}The Carleson-Hunt inequality states that there exists a finite constant $C$ such that

\begin{equation}\label{CH1}
\left( \int_{0}^{1} \left| \max_{\ell}  \left|\sum_{n=1}^{\ell} a_n e(x n) \right| \right|^2 dx \right)^{1/2} \leq C \left( \sum_{n=1}^{\infty} |a_n|^2 \right)^{1/2}
\end{equation}
for any sequence of complex numbers $\{a_n\}_{n=1}^{\infty}$ (denoting  $e(x):= e^{2 \pi i x}$).  It is not hard to see that this is equivalent to the discretized claim that

\begin{equation}\label{CH}
 \left( \frac{1}{N}\sum_{x=1}^{N} \left| \max_{\ell \leq N} \left|\sum_{n=1}^{\ell} a_n e( x n/ N) \right| \right|^2 \right)^{1/2} \leq C \left( \sum_{n=1}^{N} |a_n|^2 \right)^{1/2}
 \end{equation}
holds with a universal constant $C$, independent of $N$. This can be viewed as a natural analogue of the Carleson-Hunt inequality in the family of additive groups, $\mathbb{Z}_N$. The fact that this inequality implies (\ref{CH1}) follows from an easy approximation argument. The reverse implication is slightly more subtle but can be obtained, for instance, from Montgomery's maximal large sieve inequality \cite{M} (Theorem 2).

In light of (\ref{CH}), it is natural to consider the analogous maximal operator on the family of multiplicative groups, $\mathbb{Z}_N^{*}$. Similar multiplicative quantities arise naturally in sieve theory (this connection was our principle motivation and is briefly discussed at the end of this note). That is, we consider the inequality

\begin{equation}\label{DirichletMax}
\left( \frac{1}{\phi(N)}\sum_{\chi \mod N} \left| \max_{\ell < N} \left|\sum_{\substack{n=1 \\ (n,N)=1 }}^{\ell} a_n \chi(n) \right|\right|^2 \right)^{1/2} \leq \Delta(N) \left(\sum_{\substack{n=1 \\ (n,N)=1 }}^{N-1} |a_n|^2 \right)^{1/2}
\end{equation}
where the sum $\sum_{\chi \mod N}$  is over all Dirichlet characters modulo $N$, $\phi(N)$ is the Euler totient function, and $\Delta(N)$ is the smallest value such that the inequality holds. We are interested in the growth of the function $\Delta(N)$. It is easy to see that $\Delta(N) \ll \log(N)$ by the Rademacher-Menshov theorem (see Section \ref{sec:remarks}). By comparison to (\ref{CH}), one might hope that $\Delta(N) \ll 1$. In fact, we show in Section 2 that if one could take $\Delta(N)=O(1)$ in (\ref{DirichletMax}), then the Carleson-Hunt inequality for the trigonometric system would be an easy corollary. We note that in the case that $a_n=1$ for all $n$ it follows from work of Montgomery and Vaughan \cite{MV} that (\ref{DirichletMax}) holds with a universal constant independent of $N$.

Unfortunately, it turns out that $\Delta(N) \neq O(1)$. We prove:

\begin{theorem}\label{mainThm} There exists a subset $\mathcal{S}$ of primes of positive relative density (within the primes) such that for every $p \in \mathcal{S}$,
\begin{equation}
(\log \log (p))^{1/4} \ll \Delta(p).
\end{equation}
\end{theorem}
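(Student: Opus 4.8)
The plan is to contradict $\Delta(p)=O(1)$ by producing, for each $p$ in a positive-density set $\mathcal S$ of primes, a sequence $(a_n)$ for which the left-hand side of (\ref{DirichletMax}) exceeds the right-hand side by a factor $\gg(\log\log p)^{1/4}$.

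First I would dualize. One checks that $\Delta(p)^2$ equals the $\ell^2\to\ell^2$ operator norm of the \emph{linearized} maximal operator $a\mapsto\big(\phi(p)^{-1/2}\sum_{n\le\ell(\chi)}a_n\chi(n)\big)_{\chi}$, the supremum being over all assignments $\chi\mapsto\ell(\chi)$; equivalently $\Delta(p)^2=\sup_a\|a\|_2^{-2}\,\phi(p)^{-1}\sum_{\chi}\max_{\ell}\big|\sum_{n\le\ell}a_n\chi(n)\big|^2$. So it is enough to build $a$ for which a positive proportion of characters have a partial sum $\sum_{n\le\ell}a_n\chi(n)$ that, at some $\chi$-dependent threshold $\ell$, is much larger than its root mean square $\big(\sum_{n\le\ell}|a_n|^2\big)^{1/2}$ over all $\chi$. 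The Plancherel identity $\sum_{\chi}\big|\sum_{n\le\ell}a_n\chi(n)\big|^2=\phi(p)\sum_{n\le\ell}|a_n|^2$ forces, at any single threshold, at most a $1/G$-fraction of characters to beat the root mean square by a factor $\sqrt G$; hence the example must be genuinely multiscale. Concretely I would aim for $\asymp(\log\log p)^{1/2}$ thresholds $\ell_1<\dots<\ell_K$ and pairwise disjoint sets $C_j$ of characters, together covering a positive proportion of all $\chi$, with $\big|\sum_{n\le\ell_j}a_n\chi(n)\big|^2\gg(\log\log p)^{1/2}\sum_{n\le\ell_j}|a_n|^2$ for every $\chi\in C_j$; summing these contributions over $j$ against the relevant thresholds then yields $\Delta(p)^2\gg(\log\log p)^{1/2}$.

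To manufacture the scales I would use the multiplicative structure of $\mathbb Z_p^{*}$: fix a maximal chain of subgroups $\mathbb Z_p^{*}=H_0\supsetneq H_1\supsetneq\dots\supsetneq H_r=\{1\}$, so that $r=\Omega(p-1)$ and the indices $[H_{i-1}:H_i]$ are the prime factors of $p-1$. This is where $\mathcal S$ enters: for a positive proportion of primes $p$ one has $\Omega(p-1)\ge c\log\log p$ --- the Erd\H{o}s--Kac theorem for shifted primes --- and $\mathcal S$ is the intersection of this set with the finitely many congruence conditions on $p$ needed to seed the construction, still of positive density. Now sort each character by the least $i$ with $\chi$ trivial on $H_i$, and take $a$ supported on $\mathbb Z_p^{*}$ and assembled from the shells $H_{i-1}\setminus H_i$: on the $i$-th shell put a coefficient pattern of bounded modulus whose signs, arranged according to the size of the integer representatives, make its partial sums spike at a prescribed place. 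The key point is that for a character $\chi$ trivial on $H_i$, the contribution to $\sum_{n\le\ell}a_n\chi(n)$ of all the \emph{deep} shells $H_j\setminus H_{j+1}$ with $j\ge i$ collapses to the $\chi$-independent quantity $\sum_{n\in H_i,\ n\le\ell}a_n$, which --- thanks to the free choice of the magnitudes on those shells --- can be arranged to produce the spike we want; the \emph{shallow} shells contribute oscillatory character sums which I would control, on average over $\chi$, by orthogonality and the P\'olya--Vinogradov inequality applied on each shell (a coset of a subgroup), so that they do not destroy the spike for a positive proportion of the relevant characters.

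The hard part will be this last step: a given scale can only serve a positive-density set of characters if those characters are trivial on a subgroup of bounded index, and that same constraint makes the directly engineered part of $a$ on that subgroup small --- so one is pushed into a delicate trade-off between how many characters a scale serves and how tall its spike is, while simultaneously having to bound the oscillatory cross-terms from the shallow shells uniformly across all $\asymp\log\log p$ scales. It is precisely this trade-off, together with the error control and the Cauchy--Schwarz loss incurred when the scale-by-scale gains are recombined, that prevents one from exploiting all the scales efficiently and brings the exponent down to the stated $1/4$. If the deterministic shell construction proves unwieldy, a workable substitute is to let $a$ be random with a structured (non-uniform) distribution adapted to the subgroup chain and to compute the relevant average over both $a$ and $\chi$ by the second-moment method; note that \emph{uniform} random signs are useless here, since by Doob's maximal inequality they give only $\Delta(p)\gg1$.
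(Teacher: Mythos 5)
Your proposal is a strategy outline rather than a proof, and the step you yourself flag as ``the hard part'' is exactly where it breaks. The mechanism you rely on --- for characters trivial on $H_i$, the deep shells collapse to the $\chi$-independent quantity $\sum_{n\in H_i,\, n\le \ell}a_n$, which you then want to be a tall spike --- is quantitatively self-defeating. By Cauchy--Schwarz, $\bigl|\sum_{n\in H_i,\,n\le\ell_j}a_n\bigr|^2\le |H_i|\sum_{n\le\ell_j}|a_n|^2$, so to get the gain $(\log\log p)^{1/2}$ you need $|H_i|\gg(\log\log p)^{1/2}$; but the characters trivial on $H_i$ number only $(p-1)/|H_i|$, so each such class has density at most $|H_i|^{-1}$, and you cannot simultaneously have near-positive-density classes $C_j$ and growing $\chi$-independent spikes except at the exact boundary of this trade-off --- and there with no room for the losses from the ``shallow shell'' cross-terms, the disjointness of the $C_j$ (in a cyclic group the subgroups of a given approximate order are few and the corresponding character classes are nested), and the requirement that most of the $\ell^2$ mass up to each $\ell_j$ sit on the corresponding $H_{i(j)}$ for all $j$ at once. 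None of this is carried out, the oscillatory terms are only said to be controllable ``by orthogonality and P\'olya--Vinogradov,'' and the exponent $1/4$ is asserted to emerge from the trade-off rather than derived. More fundamentally, no source of unboundedness is ever exhibited: since the additive analogue (\ref{CH}) is uniformly bounded, any counterexample must exploit a genuinely bad \emph{ordering} of an orthonormal system, i.e.\ a Kolmogorov-type rearrangement phenomenon, and your sketch asks to rebuild such a divergence example from scratch without supplying it.

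For comparison, the paper proceeds quite differently: by Fouvry's theorem (Proposition \ref{primes}) one takes $p$ with a prime $q\mid p-1$, $q\ge Bp^{.6687}$, and restricts the coefficients to the subgroup $\mathcal A$ of order $q$; restricting characters mod $p$ to $\mathcal A$ and writing elements via a discrete logarithm turns the multiplicative maximal operator into the additive maximal operator on $\mathbb{Z}_q$, but with the terms appearing in the order induced by the integer representatives mod $p$. Proposition \ref{mainProp} (Erd\H{o}s--Tur\'an--Koksma plus Weil's bound, which is where $q\gg p^{1/2+\epsilon}$ is needed) shows that for $s\approx\delta\log^{1/2}(p)$ and \emph{any} permutation $\sigma$ of $[s]$ there is $g\in\mathcal A$ whose powers $g^{\sigma(1)},\dots,g^{\sigma(s)}$ occur in exactly that order among the residues mod $p$; choosing $\sigma$ and the coefficients from Nakata's quantitative Kolmogorov rearrangement theorem (Proposition \ref{Nakata1}, discretized in Corollary \ref{NakataCor}) then forces the maximal operator to be $\gg\log^{1/4}(s)\gg(\log\log p)^{1/4}$. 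The $1/4$ thus comes from Nakata's theorem combined with the constraint $s\ll\log^{1/2}(p)$, an input your argument has no analogue of.
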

Thus $(\log \log (N))^{1/4} \ll \Delta(N)$ holds infinitely often. It remains an interesting problem to establish sharp bounds on the growth of $\Delta(N)$. In particular, any refinement of the upper bound $\Delta(N) \ll \log(N)$ (from the Rademacher-Menshov theorem) would be extremely interesting.

Let us briefly describe the idea behind the proof of Theorem \ref{mainThm}. It is known (Kolmogorov's rearrangement theorem) that if one replaces the maximal operator $\max_{\ell}  \left|\sum_{n=1}^{\ell} a_n e(x n) \right| $ with an operator of the form $\max_{\ell}  \left|\sum_{n=1}^{\ell} a_n e(x \sigma(n)) \right| $ for a carefully chosen permutation $\sigma : \mathbb{Z}_{+} \rightarrow \mathbb{Z}_{+}$ then one can force the analogue of (\ref{CH1}) to fail. Analogously, one can find a family of permutations of $[N]$ (for increasing $N$) that forces the constant in (\ref{CH}) to grow with $N$ (see Corollary \ref{NakataCor} below for a precise formulation). We will show that if one restricts the maximal operator associated to multiplicative group $\mathbb{Z}_{p}^{*}$ (for suitable $p$) to a carefully chosen subset of the coefficients, then this operator is equivalent to the (additive) maximal operator on a the group $\mathbb{Z}_{q}$ in a badly behaved ordering, for a related (but much smaller) $q$.

\section{Connection with the Carleson-Hunt inequality}

In this section, we prove that if (\ref{DirichletMax}) with $\Delta(N)=O(1)$ did hold then the classical Carleson-Hunt inequality (\ref{CH1}) would easily follow. By a standard density argument, it suffices to prove (\ref{CH1}) for finite sequences $\{a_n\}_{n=1}^{k}$ as long as the constant $C$ does not depend on $k$. Indeed, once $k$ is fixed, we will show that

\begin{equation}
 \left( \frac{1}{M}\sum_{x=1}^{M} \left| \max_{\ell < k} \left|\sum_{n=1}^{\ell} a_n e( x n/ M) \right| \right|^2 \right)^{1/2} \leq C \left( \sum_{n=1}^{k} |a_n|^2 \right)^{1/2}
\end{equation}
holds for an infinite increasing sequence of integral $M$'s and a constant $C$ independent of $k$. Clearly, this is sufficient as the sum on the left will converge to the Riemann integral over the unit interval. Consider a large prime $2^{k} < p$. Now we apply (\ref{DirichletMax}) with $\Delta(p)\leq C$, and associating the coefficient $a_i$ to $\chi(2^i)$ we have

\begin{equation}\label{DirichletMax5}
\left( \frac{1}{\phi(p)}\sum_{\chi \mod p} \left| \max_{\ell < k } \left|\sum_{\substack{i=1}}^{\ell} a_i \chi(2^i) \right|\right|^2 \right)^{1/2} \leq C \left(\sum_{n=1}^{k} |a_n|^2 \right)^{1/2}.
\end{equation}

We let $\alpha$ be a generator of $\Z_p^*$. For $g \in \Z_p^*$, we define $\nu(g)$ to be the element of $[p-1]$ such that $\alpha^{\nu(g)}=g$.
We may express $\chi(g)=e(a \nu(g) / (p-1))$ (for some $0 \leq a < p-1 $). Thus $\nu$ can be thought of as a permutation of $[p-1]$. In addition, it follows from this definition that $\nu(2^i)=i\nu(2)$ (mod p). We thus have that (\ref{DirichletMax5}) can be expressed as:

\begin{equation}\label{DirichletMax6}
\left( \frac{1}{p-1} \sum_{x=1}^{p-1} \left| \max_{\ell < k } \left|\sum_{\substack{i=1}}^{\ell} a_i e\left(\frac{i\nu(2)x}{p-1} \right) \right|\right|^2 \right)^{1/2} \leq C \left(\sum_{n=1}^{k} |a_k|^2 \right)^{1/2}.
\end{equation}

We define coprime $L,M \in \Z$ by $\frac{\nu(2)}{p-1} = \frac{L}{M}$. We observe that $M > \log_2(p)$. To see this, consider that $M \nu(2) = L (p-1)$ implies $2^M = \alpha^{L (p-1)}\equiv 1$. Note that $M | (p-1)$. Substituting this into (\ref{DirichletMax6}), we have
\begin{equation}\label{DirichletMax7}
\left( \frac{1}{p-1} \sum_{x=1}^{p-1} \left| \max_{\ell < k } \left|\sum_{\substack{i=1}}^{\ell} a_i e\left(\frac{iLx}{M} \right) \right|\right|^2 \right)^{1/2} \leq C \left(\sum_{n=1}^{k} |a_k|^2 \right)^{1/2}.
\end{equation}

%Now let $\nu(2)/(p-1)= M$. Clearly $\nu(2^i) \mod p-1$ are distinct for $1\leq i \leq \log(p)$. Thus $M \gg \log(p)$.
Since $M | (p-1)$ and $e\left(\frac{iLx}{M} \right)$ has period $M$ as a function of $x$, this can be rewritten as:
\begin{equation}\label{DirichletMax8}
\left( \frac{1}{M} \sum_{x=1}^{M} \left| \max_{\ell < k } \left|\sum_{\substack{i=1}}^{\ell} a_i e\left(\frac{iLx}{M} \right) \right|\right|^2 \right)^{1/2} \leq C \left(\sum_{n=1}^{k} |a_k|^2 \right)^{1/2}.
\end{equation}
We perform the change of variable $Lx \rightarrow y$ to obtain
\begin{equation}\label{DirichletMax9}
\left( \frac{1}{M} \sum_{y=1}^{M} \left| \max_{\ell < k } \left|\sum_{\substack{i=1}}^{\ell} a_i e\left(\frac{iy}{M} \right) \right|\right|^2 \right)^{1/2} \leq C \left(\sum_{n=1}^{k} |a_k|^2 \right)^{1/2}.
\end{equation}
for some $M \gg \log(p)$. This completes the proof.

\begin{remark}One could also deduce the Carleson-Hunt inequality for Walsh series from the claim that $\Delta(N)=O(1)$. We briefly sketch the argument. Choose $N$ to be the product of $d$ distinct odd primes. Then $Z_N^{*}$ will contain an isomorphic copy of the group $Z_{2}^{d}$. The characters of this group are distributionally equivalent with the first $2^d$ Walsh functions. The maximal operator on $\mathbb{Z}_N^{*}$ will induce some ordering on these functions other than the standard ordering. It follows, however, from a combinatorial lemma of Bourgain \cite{B} (Lemma 2.3) that there is a function $B(d)$ (tending to infinity) such that any ordering of the first $2^d$ Walsh functions must contain a subsequence of length $B(d)$ distributionally equivalent to the first $B(d)$ Walsh functions in the standard ordering.
\end{remark}

\section{Auxiliary Results}

In this section, we collect some auxiliary results that will be needed in the proof of Theorem \ref{mainThm}.
We first note the following result from \cite{F}:
\begin{proposition}\label{primes} (Fouvry)
Let $\mathcal{P}(N)$ denote the largest prime divisor of $N$. Then for a positive
proportion of the primes $p$, we have $\mathcal{P}(p-1) \geq  B p^{.6687}$ for some positive constant $B$.
\end{proposition}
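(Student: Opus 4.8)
This is a theorem of Fouvry \cite{F}, so strictly speaking the proof is a citation; but let me indicate the strategy one would follow to prove a statement of this kind. Fix a large $x$ and count primes $p$ in the dyadic range $(x/2,x]$, which already form a positive proportion of all primes up to $x$; for such $p$ we have $p^{.6687}\asymp x^{.6687}$, so it suffices to show that, for a positive proportion of these $p$, the number $p-1$ has a prime divisor $q\geq c\,x^{.6687}=:Q_0$ for a suitable absolute constant $c>0$. Write $p-1=qm$ with $q$ prime and $q\geq Q_0$. Since any two distinct prime divisors of $p-1$ that are $\geq Q_0$ would have product at least $Q_0^2=c^2x^{2\times.6687}>x>p-1$ for $x$ large, which is impossible, each admissible $p$ arises from \emph{exactly one} such pair $(q,m)$. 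Hence the quantity to bound from below is simply $\sum_{Q_0\leq q\leq x^{\eta}}\pi(x;q,1)$, where $\pi(x;q,1):=\#\{p\leq x\ \text{prime}:p\equiv1\pmod q\}$ and $\eta$ is any constant, with $.6687<\eta<1$, up to which we have usable information on primes in progressions.

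If one could insert the expected asymptotic $\pi(x;q,1)\sim\frac{x}{\phi(q)\log x}$ for all $q$ up to $x^{\eta}$ and sum, Mertens' theorem gives $\sum_{Q_0\leq q\leq x^{\eta}}\frac{1}{\phi(q)}=\log(\eta/.6687)+o(1)$, a positive constant, so the resulting main term is a genuinely positive proportion of $\pi(x)$ — and this works with $.6687$ replaced by \emph{any} constant $<1$. Thus all the content of the theorem lies in an equidistribution-of-primes input. The Bombieri--Vinogradov theorem supplies exactly what is needed for $\eta<1/2$, and already yields the conclusion with any exponent $<1/2$ (this is essentially Goldfeld's argument); getting past the $1/2$-barrier is the real issue, and it is what forces the specific numerical exponent.

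This last step is the main obstacle, and it is where Fouvry's machinery enters. Two structural features are exploited: the residue class is \emph{fixed} (equal to $1$), and the moduli $q$ are \emph{prime}, which together allow one to push well beyond level $1/2$. Beyond $x^{1/2}$ the plain asymptotic for a fixed residue class is out of reach, so one does not count crudely but instead uses a sieve-weighted count of Buchstab/Harman type, decomposing the indicator of the primes so that the required arithmetic information appears only in ``Type I'' sums and bilinear ``Type II'' sums over arithmetic progressions; these are then extended past modulus $x^{1/2}$ by Linnik's dispersion method, which reduces them to bilinear forms in Kloosterman fractions controlled by the Weil bound together with the Deshouillers--Iwaniec spectral large sieve for Kloosterman sums. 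Balancing the sieve parameters against the ranges in which these estimates are valid is precisely what produces the exponent $.6687$. Since nothing stronger than some fixed exponent $>1/2$ is needed in the sequel, I would simply quote the statement from \cite{F} rather than reproduce this analysis.
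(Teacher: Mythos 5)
Your proposal matches the paper, which states this result purely as a citation to Fouvry \cite{F} (noting only that any exponent of the form $\tfrac12+\epsilon$ would suffice for the application) and gives no proof of its own. Your accompanying sketch of Fouvry's argument — reduction to $\sum_{q}\pi(x;q,1)$ over prime moduli beyond the Bombieri--Vinogradov range, handled by dispersion and Deshouillers--Iwaniec-type Kloosterman sum estimates — is a reasonable and accurate description of the cited work, so quoting \cite{F} as you do is exactly what the paper does.
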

For our purposes, $\mathcal{P}(p-1) \gg p^{\frac{1}{2} + \epsilon}$ for any fixed $\epsilon>0$ would suffice.
We will need a quantitative multi-dimensional form of Weyl's criterion which can be found in Chapter 2 of \cite{KN}:

\begin{proposition}\label{ETK}(Erd\"{o}s-Turan-Koksma) Let $P$ denote a sequence of $N$ points, $x_1,x_2,\ldots,x_N \in [0,1]^s$. Define the discrepancy of this sequence as

\[D_N(x_1,x_2,\ldots,x_N) :=  \sup_{I \in \mathcal{B}} \left| \frac{ |I\cap P|}{N} - |I| \right|,  \]
where $\mathcal{B}$ denotes the set of all $s$-dimensional boxes in $[0,1]^s$, $|I|$ denotes the Lebesgue measure of $I$, and $|I \cap P|$ denotes the number of points $x_i \in I$. Furthermore, for $h \in \mathbb{Z}^{s}$ let
\[r(h) :=\prod_{i=1}^{s} \max(1,|h_i|).\]
Then, for all $m \in \mathbb{N}$, we have that

\begin{equation}
D_N(x_1,x_2,\ldots,x_N) \leq 2s^2 3^{s+1} \left(\frac{1}{m} + \sum_{\substack{h \in \mathbb{Z}^s \\ 0 < ||h||_{\infty}\leq m   } } \frac{1}{r(h)} \left| \frac{1}{N} \sum_{n=1}^{N} e(\left< h, x_n \right>) \right| \right),
\end{equation}
where $||h||_\infty$ denotes the maximal element of $h$ in absolute value, and $\left< h, x_n \right>$ denotes the dot product over $\mathbb{R}$.

\end{proposition}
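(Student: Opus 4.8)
The plan is to run the classical Fourier-analytic argument behind every Erd\H{o}s--Tur\'an-type inequality: approximate the indicator of an arbitrary box $I \subseteq [0,1]^s$ from above and below by trigonometric polynomials of degree at most $m$ in each variable, expand these in Fourier series, and feed in the hypothesis only through the exponential sums $\frac1N\sum_{n} e(\langle h, x_n\rangle)$ that appear as coefficients. After fixing a box $I = \prod_{j=1}^{s}[\alpha_j,\beta_j) \subseteq [0,1]^s$, the quantity to control is $\left|\frac{|I\cap P|}{N} - |I|\right|$, and its supremum over all such $I$ is exactly $D_N$.

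The first ingredient is the one-dimensional Selberg construction: for every interval $[\alpha,\beta) \subseteq [0,1)$ there exist trigonometric polynomials $S^{\pm}_{\alpha,\beta}$ of degree at most $m$ with $S^{-}_{\alpha,\beta} \le \mathbf{1}_{[\alpha,\beta)} \le S^{+}_{\alpha,\beta}$, with constant Fourier coefficient $\widehat{S^{\pm}}(0) = (\beta - \alpha) \pm \frac{1}{m+1}$, and with $|\widehat{S^{\pm}}(h)| \le \frac{1}{m+1} + \frac{1}{\pi|h|}$ for $1 \le |h| \le m$ (any approximants with error terms of this shape, e.g.\ ones built from the Fej\'er kernel, serve just as well). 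In particular $S^{+}_{\alpha,\beta} \ge \mathbf{1}_{[\alpha,\beta)} \ge 0$, so the one-dimensional majorants are nonnegative.

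The second step, which is the heart of the matter, is to assemble from these an $s$-dimensional majorant $F^{+}$ and minorant $F^{-}$ of $\mathbf{1}_I = \prod_{j} \mathbf{1}_{[\alpha_j,\beta_j)}$, each a trigonometric polynomial of degree $\le m$ in every variable, such that $F^{-} \le \mathbf{1}_I \le F^{+}$, such that $\widehat{F^{\pm}}(0) = |I| + O_s(1/m)$, and such that $|\widehat{F^{\pm}}(h)| \ll_s 1/r(h)$ for $0 < \|h\|_\infty \le m$. For the majorant one can take the tensor product $F^{+}(x) = \prod_{j=1}^{s} S^{+}_{\alpha_j,\beta_j}(x_j)$: it dominates $\mathbf{1}_I$ because every factor is nonnegative and dominates its own indicator, and its excess mass is estimated by the telescoping identity $\prod_j S^{+}_j - \prod_j \mathbf{1}_j = \sum_k \left(\prod_{j<k} S^{+}_j\right)\!\left(S^{+}_k - \mathbf{1}_k\right)\!\left(\prod_{j>k}\mathbf{1}_j\right)$ using $\int S^{+}_j \le 2$ and $\int (S^{+}_k - \mathbf{1}_k) = \frac{1}{m+1}$. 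For the minorant one needs an inclusion--exclusion (Bonferroni) scheme: expand $\mathbf{1}_I$ over subsets of the coordinates via the complements $1 - \mathbf{1}_{[\alpha_j,\beta_j)}$, replace each elementary product by a one-sided trigonometric approximant whose sign is chosen to keep the whole expression below $\mathbf{1}_I$, and check that the constant term still only slips by $O_s(1/m)$. In all cases the nonzero Fourier coefficients factorize, with each one-dimensional factor contributing at most $\frac{1}{m+1} + \frac{1}{\pi|h_j|} \ll 1/\max(1,|h_j|)$, which is what gives $|\widehat{F^{\pm}}(h)| \ll_s 1/r(h)$. A careful accounting of the $s$ telescoping terms, the subsets entering the Bonferroni expansion, and the constant $1/\pi < 1$ is precisely what turns these $O_s(\cdot)$ bounds into the explicit factor $2 s^2 3^{s+1}$; this bookkeeping, together with the sign control in the minorant, is the only genuinely delicate part of the argument.

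Granting the construction, the proof ends in one line. For any box $I$,
\[
\frac{|I\cap P|}{N} \;=\; \frac1N\sum_{n=1}^{N}\mathbf{1}_I(x_n) \;\le\; \frac1N\sum_{n=1}^{N}F^{+}(x_n) \;=\; \widehat{F^{+}}(0) + \sum_{0 < \|h\|_\infty \le m}\widehat{F^{+}}(h)\,\frac1N\sum_{n=1}^{N}e(\langle h, x_n\rangle),
\]
and since $\widehat{F^{+}}(0) \le |I| + O_s(1/m)$ and $|\widehat{F^{+}}(h)| \ll_s 1/r(h)$, the right-hand side is at most $|I|$ plus the asserted bound; running the same computation with $F^{-}$ gives the matching lower bound for $\frac{|I\cap P|}{N} - |I|$. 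Taking the supremum over all boxes $I$ yields the stated estimate for $D_N$.
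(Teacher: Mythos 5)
First, a point of comparison: the paper does not prove Proposition~\ref{ETK} at all --- it is quoted from Chapter~2 of Kuipers--Niederreiter \cite{KN} --- so there is no internal proof to measure your argument against; it can only be judged on its own terms. Your outline is the standard Fourier-analytic route (one-dimensional Selberg/Fej\'er one-sided approximants, expansion in frequencies $\|h\|_\infty\le m$, coefficient bounds $\ll_s 1/r(h)$), and the majorant half is sound: tensoring the nonnegative one-dimensional majorants does dominate $\mathbf{1}_I$, and your telescoping identity controls the constant term up to $O_s(1/m)$.

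The genuine gap is the minorant, which you flag as ``the only genuinely delicate part'' but do not carry out, and the scheme you sketch does not work as stated. Expanding $\mathbf{1}_I=\sum_{S\subseteq[s]}(-1)^{|S|}\prod_{j\in S}\bigl(1-\mathbf{1}_{[\alpha_j,\beta_j)}\bigr)$ and replacing each elementary product by a suitably signed one-sided approximant is circular: the terms with $|S|$ even and $|S|\ge 2$ enter with a $+$ sign, so they require trigonometric \emph{minorants} of products of indicators, which is exactly the problem being solved; and the crude union-bound minorant $1-\sum_j\bigl(1-S^-_j(x_j)\bigr)$ loses an amount depending on the box rather than $O_s(1/m)$. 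The gap is fixable by the same telescoping device you already used: take $F^-=\prod_j S^+_j-\sum_{k}\bigl(\prod_{j\ne k}S^+_j\bigr)\bigl(S^+_k-S^-_k\bigr)$, which satisfies $F^-\le\mathbf{1}_I$ because each telescoping term $\bigl(\prod_{j<k}\mathbf{1}_j\bigr)\bigl(S^+_k-\mathbf{1}_k\bigr)\bigl(\prod_{j>k}S^+_j\bigr)$ is nonnegative and bounded by $\bigl(\prod_{j\ne k}S^+_j\bigr)\bigl(S^+_k-S^-_k\bigr)$, and its constant term is $|I|-O_s(1/m)$ with coefficients $\ll_s 1/r(h)$; alternatively, avoid minorants entirely by partitioning the complement of $I$ into $O(s)$ axis-parallel boxes and applying the majorant bound to each, since the signed discrepancies of $I$ and of its complement pieces sum to zero. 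Finally, either repair yields a bound of the stated \emph{form} with some constant $C(s)$ of exponential type, but not obviously the literal $2s^23^{s+1}$; for the application in this paper any $C^s$-type constant suffices, but to assert the proposition as stated you should either do that bookkeeping or simply cite \cite{KN}, as the paper does.
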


We will also need the following version of Weil's character sum estimate. This can be found, for instance, on page 45 of \cite{S}:

\begin{proposition}\label{Weil}(Weil) Let $p$ be a prime and $g(x)= g_n x^n+ \ldots +g_0$ a degree $n$ polynomial ($0<n<p$) with integer coefficients such that $p \nmid g_n$. Then,

\begin{equation}\label{Weq}
\left| \sum_{x=0}^{p-1} e(g(x)/p) \right| \leq (n-1) p^{1/2}.
\end{equation}

\end{proposition}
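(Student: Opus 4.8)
The plan is to deduce \eqref{Weq} from the Riemann Hypothesis for the Artin--Schreier curve attached to $g$, following the standard template for additive-type exponential sums. Write $\psi(t)=e(t/p)$ for the canonical additive character of $\mathbb{F}_p$, extend $g$ (with the same coefficients) over every $\mathbb{F}_{p^m}$, and set $S_m:=\sum_{x\in\mathbb{F}_{p^m}}\psi\bigl(\mathrm{Tr}_{\mathbb{F}_{p^m}/\mathbb{F}_p}(g(x))\bigr)$, so that $S_1$ is exactly the sum in \eqref{Weq}. First I would record the point-counting identity for the curve $C:\ y^p-y=g(x)$: since $y\mapsto y^p-y$ is $\mathbb{F}_p$-linear on $\mathbb{F}_{p^m}$ with image the trace-zero hyperplane, for each fixed $x$ the fibre has size $p\cdot\mathbf{1}[\mathrm{Tr}(g(x))=0]=\sum_{t\in\mathbb{F}_p}\psi\bigl(t\,\mathrm{Tr}(g(x))\bigr)$, and summing over $x$ gives $\#C(\mathbb{F}_{p^m})=p^m+\sum_{t\in\mathbb{F}_p^{\times}}S_m(t)$, where $S_m(t)$ is the analogous sum for the polynomial $tg$. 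Because $tg$ still has degree $n$ with leading coefficient prime to $p$, it suffices to prove $|S_m(t)|\leq(n-1)p^{m/2}$ uniformly in $m$ and in $t\neq0$; the case $m=t=1$ is \eqref{Weq}.

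Next I would pass to the $L$-function $L(T):=\exp\bigl(\sum_{m\geq1}S_m\,T^m/m\bigr)$ of the character $\psi\circ g$. The algebraic input needed is that, because $p\nmid n$ (forced by $0<n<p$), the cover $C\to\mathbb{A}^1$ is ramified only over $\infty$ with the expected conductor, so $L(T)$ is a \emph{polynomial} of degree exactly $n-1$. Factoring $L(T)=\prod_{i=1}^{n-1}(1-\alpha_iT)$ gives $S_m=-\sum_{i=1}^{n-1}\alpha_i^m$, and the whole estimate collapses to the assertion $|\alpha_i|=\sqrt{p}$ for every $i$. This is precisely the Riemann Hypothesis for $C$: the numerator of the zeta function of the smooth projective model of $C$ (which has genus $(p-1)(n-1)/2$) is, up to the trivial factor coming from $\mathbb{P}^1$, the product over $t\in\mathbb{F}_p^{\times}$ of the $L$-functions of the characters $u\mapsto\psi(t\,u)$ composed with $g$, each of degree $n-1$, and all of its reciprocal roots have absolute value $\sqrt{p}$. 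Granting this, $|S_m|\leq(n-1)p^{m/2}$ for all $m$, and in particular $|S_1|\leq(n-1)\sqrt{p}$.

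The one genuinely hard step is the curve RH, and I would not reprove it via Weil's correspondence/Castelnuovo argument; the cleaner and essentially self-contained route — the one matching the elementary treatment in the cited source — is Stepanov's polynomial method as developed by Stepanov and Schmidt. There one works directly with $N_m:=\#C(\mathbb{F}_{p^m})$ and proves $|N_m-(p^m+1)|\leq c_\gamma\,p^{m/2}$ for large $m$, with $\gamma=(p-1)(n-1)/2$ the genus, by constructing an auxiliary polynomial of carefully controlled degree that vanishes to prescribed high order at every $\mathbb{F}_{p^m}$-rational point of $C$ — using the Frobenius relation $x^{p^m}=x$ to keep degrees small — so that a crude zero count caps $N_m$ from above, with a companion construction (or a quadratic twist) supplying the matching lower bound. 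Once $|N_m-(p^m+1)|\ll_\gamma p^{m/2}$ is known for all large $m$, a routine zeta-function argument upgrades it to $|\alpha_i|=\sqrt{p}$: the uniform-in-$m$ bound forces $|\alpha_i|\leq\sqrt{p}$, and the functional equation $\alpha_i\alpha_{n-1-i}=p$ then forces equality.

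Finally I would take care that the hypotheses are used where they belong. The conditions $0<n<p$ and $p\nmid g_n$ are exactly what guarantee tame ramification at $\infty$ and hence $\deg L=n-1$ — this is why the bound carries $n-1$ rather than $n$, the degree of $L$ being governed by the ramification at infinity rather than merely by $\deg g$ — and this hypothesis persists when $g$ is replaced by $tg$, which is what legitimizes the reduction to a single $t$. To summarize: the entire difficulty is concentrated in the Riemann Hypothesis for the curve $C$, while everything else is the point-counting identity, the shape of the Artin--Schreier $L$-function, and bookkeeping.
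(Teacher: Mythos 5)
The paper does not prove this proposition at all: it is quoted as a classical result of Weil, with a pointer to page 45 of Schmidt's book \cite{S}, so there is no internal argument to compare yours against. Your outline is the standard modern route to that result --- the point-counting identity for the Artin--Schreier cover $y^p-y=g(x)$, the rationality and degree $n-1$ of the associated $L$-polynomial, and the Riemann Hypothesis for the curve, with the hard analytic core delegated to the Stepanov--Schmidt polynomial method --- and this is indeed the proof strategy of the cited source, so in spirit you are reconstructing the reference rather than diverging from the paper. Two caveats. First, your phrase ``tame ramification at $\infty$'' is wrong terminology: an Artin--Schreier cover of degree $p$ in characteristic $p$ is \emph{wildly} ramified at infinity; what the hypotheses $0<n<p$ (hence $p\nmid n$) and $p\nmid g_n$ actually buy is control of the wild ramification (Swan conductor equal to $n$), which is what yields $\deg L = n-1$ and the factor $n-1$ in the bound. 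The conclusion you draw is correct, but the stated reason is not. Second, the genuinely difficult steps --- the polynomiality of $L(T)$ and the bound $|N_m-(p^m+1)|\ll_\gamma p^{m/2}$ via the auxiliary-polynomial construction --- are asserted rather than executed, so what you have is a correct and well-organized reduction to the curve RH plus a citation of Stepanov's method, not a self-contained proof; for the role this proposition plays in the paper (an imported black box), that level of detail is appropriate, but you should be clear that the core estimate is being quoted, exactly as the authors do.
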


We will also use the following quantitative form of Kolmogorov's rearrangement theorem due to Nakata \cite{N} (Lemma 4):

\begin{proposition}\label{Nakata1}(Nakata) There exist universal real constants $c_1, c_2 > 0$ with the following property. For any $N \in \mathbb{N}$, there exists a permutation $\sigma:[N]\rightarrow [N]$ (where $[N]$ denotes the set $\{1, 2, \ldots, N\}$) and complex numbers $\{a_n\}_{n=1}^{N}$ satisfying $\sum_{n=1}^{N}|a_n|^2=1$ such that

\[ \left| \{x \in [0,1] :  \widetilde{\mathcal{M}}(x) > c_1 \log^{1/4}(N) \} \right|   \geq c_2 \]
holds, where

\[ \widetilde{\mathcal{M}}(x):= \max_{\ell \leq N} \left|\sum_{n=1}^{\ell} a_n e(\sigma(n) x)\right| .\]

\end{proposition}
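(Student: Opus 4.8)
The plan is to construct a quantitative form of Kolmogorov's rearrangement example; equivalently, to adapt the classical lower-bound construction of Menshov for general orthonormal systems to the (much more rigid) trigonometric system, keeping careful track of the loss incurred. Since $e(mx)$ has period $1$ in $x$, we may work on $\R/\Z$ throughout and estimate $\widetilde{\mathcal M}$ directly.

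First I would isolate the basic building block. Fix a scale and a short interval $I$; using a block of consecutive characters $e(nx)$ over a prescribed frequency range, together with well-chosen phases --- which amounts to multiplying the coefficients by unimodular constants, i.e. replacing $e(nx)$ by $e(n(x-x_0))$ --- one builds a trigonometric polynomial whose partial sums \emph{in a prescribed order} form a Dirichlet-type bump localized near a point of our choosing. Partitioning $I$ into sub-intervals and superposing such blocks with disjoint frequency supports (and, if one insists on genuine spatial confinement to $I$, convolving against a Fej\'er-type bump, at the cost of a small $L^2$ tail) yields a ``packet'': a polynomial, normalized so that $\sum_n|a_n|^2$ over the packet equals $1$, together with an ordering of its frequencies under which, for almost every $x\in I$, some partial sum of the packet has size $\gg 1$ at $x$. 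The point is that packets living on disjoint intervals interact only through these small, $1/|n|$-type tails, which are controlled by the standard Dirichlet-kernel estimates.

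Next I would stack $L$ layers of such packets at geometrically decreasing scales $1,1/m,1/m^2,\dots$: layer $j$ runs one packet inside each interval of the $m^{j-1}$-equipartition of $\R/\Z$, the whole of layer $j$ is placed after all of layers $1,\dots,j-1$ in the permutation $\sigma$, and the phases are tuned so that the bump produced by layer $j$ at a point $x$ reinforces, rather than cancels, those of the earlier layers at $x$. Choosing the weight of layer $j$ so that each layer contributes roughly the same amount $\beta$ to the running partial sum at a typical $x$, the constraint $\sum_n|a_n|^2=1$ forces $\beta\asymp L^{-1/2}$, so after all $L$ layers the partial sums reach size $\asymp L\beta\asymp L^{1/2}$ on a set whose measure is bounded below by an absolute constant, provided $m$ is taken large enough that the intersection over all $L$ layers of the ``good sets'' (each a large fraction of each cell) still has measure $\geq c_2$. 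Finally one counts the frequency budget: realizing a layer-$j$ packet --- whose blocks must resolve scale $m^{-j}$ while being confined to cells of size $m^{-(j-1)}$, with disjoint supports across the $m^{j-1}$ cells --- consumes a number of frequencies growing geometrically in $j$, so $N=\exp(\Theta(L))$ and hence $L\asymp\log N$. Absorbing into the bound the various imperfections --- accumulation of the localization tails, shrinkage of the good set across scales, and the fact that the phase of a layer-$j$ bump cannot be aligned with all earlier layers along every branch of the tree --- degrades the optimistic $L^{1/2}=(\log N)^{1/2}$ to $(\log N)^{1/4}$, which is the assertion (with absolute $c_1,c_2$).

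The main obstacle is precisely this last bookkeeping. One must simultaneously (i) keep the localizing tails at layer $j$ from swamping the bumps already built by layers $1,\dots,j-1$; (ii) keep the good set at each of the $L$ scales a large enough fraction of every cell that its intersection still has measure $\geq c_2$; and (iii) keep $\sum_n|a_n|^2=1$ while the layered bumps add up --- noting that, because the frequency offsets of the packets are severely constrained by disjointness, the per-layer bump phases can be brought into agreement only partially, so the bumps do not add fully coherently. These requirements pull against one another, and it is the trigonometric realization --- as opposed to the abstract orthonormal packets Menshov is free to design --- that makes the trade-off cost an extra square root, leaving the $(\log N)^{1/4}$ of the statement rather than $(\log N)^{1/2}$. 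The tools needed to make this precise are the $\ell^1$- and $\ell^\infty$-type bounds for Dirichlet kernels and a careful choice of the scale ratio $m=m(L)$; we omit the (lengthy) verification and refer to \cite{N}.
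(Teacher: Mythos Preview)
The paper does not prove this proposition at all: it is quoted verbatim as Lemma~4 of Nakata~\cite{N} and used as a black box. There is therefore no ``paper's own proof'' to compare your sketch against; the authors simply import the result.

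As for the sketch itself, it captures the broad shape of Kolmogorov--Menshov-type constructions (layered packets at geometrically decreasing scales, phases tuned so that partial sums reinforce, frequency budget growing geometrically so $L\asymp\log N$), and in that sense it is a reasonable narrative of why \emph{some} power of $\log N$ should appear. But the one place where the argument must do actual work --- explaining why the exponent is $1/4$ rather than $1/2$ --- is precisely where you wave your hands: you say that ``accumulation of tails, shrinkage of the good set, and phase misalignment'' together ``degrade the optimistic $L^{1/2}$ to $(\log N)^{1/4}$'', without any mechanism that would produce exactly a square-root loss. In Nakata's construction the $1/4$ arises from a concrete two-parameter optimization (roughly, the number of levels versus the number of blocks per level, under the constraint that localization in the trigonometric system costs real frequencies), and without pinning down that trade-off your sketch is a plausibility argument rather than a proof. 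Since you yourself end by deferring to \cite{N}, this is perhaps intentional --- but then what you have written is closer to motivation for citing Nakata than to an independent proof.
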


We remark that Nakata has a slightly stronger refinement of Kolmogorov's rearrangement theorem \cite{N2} where there are some additional iterated logarithmic factors. However, the result there is formulated in a slightly different way and it would require some additional work to derive a statement sufficient for our purposes from it. For the sake of simplicity, we will not pursue this modification here. Next, we derive a discrete version of Proposition \ref{Nakata1}.

\begin{corollary}\label{NakataCor}There exist universal real constants $c_1, c_2 > 0$ with the following property. For any $N \in \mathbb{N}$, there exists a permutation $\sigma:[N]\rightarrow [N]$ and complex numbers $\{b_n\}_{n=1}^{N}$ satisfying $\sum_{n=1}^{N}|b_n|^2=1$ such that

\[ \left| \{a \in [M] :  \mathcal{M}(a) > c_1 \log^{1/4}(N) \} \right|   \geq c_2 M \]
holds for any positive integer $M$, where $\mathcal{M}:[M] \rightarrow \mathbb{R}$ is defined by

\[ \mathcal{M}(a):= \max_{\ell \leq N} \left|\sum_{n=1}^{\ell} b_n e(\sigma(n) a/M)\right| .\]

\end{corollary}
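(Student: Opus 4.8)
The plan is to read Corollary \ref{NakataCor} off Proposition \ref{Nakata1} by sampling the continuous maximal function $\widetilde{\mathcal{M}}$ at the $M$-th roots of unity, exploiting the freedom to insert a single phase into the coefficients. Apply Proposition \ref{Nakata1} with the given $N$ to obtain a permutation $\sigma:[N]\to[N]$ and numbers $\{a_n\}_{n=1}^N$ with $\sum_n|a_n|^2=1$ and $|\{x\in[0,1]:\widetilde{\mathcal{M}}(x)>T\}|\geq c_2$, where $T:=c_1\log^{1/4}(N)$ and $\widetilde{\mathcal{M}}$ is as in Proposition \ref{Nakata1}; we keep the same $\sigma$ and the same universal constants $c_1,c_2$ in the corollary, modifying only the coefficients. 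Because each $\sigma(n)$ is an integer, $\widetilde{\mathcal{M}}$ is continuous and $1$-periodic. For $\theta\in[0,1/M]$ put $b_n:=a_n e(\sigma(n)\theta)$; then $\sum_n|b_n|^2=1$ and
\[
\mathcal{M}(a)=\max_{\ell\leq N}\Bigl|\sum_{n=1}^{\ell}b_n\,e(\sigma(n)a/M)\Bigr|=\widetilde{\mathcal{M}}\bigl(a/M+\theta\bigr),\qquad a\in[M].
\]
It therefore suffices to exhibit one $\theta\in[0,1/M]$ for which at least $c_2M$ of the $M$ points $a/M+\theta$ lie in $\{\widetilde{\mathcal{M}}>T\}$.

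To find such a $\theta$, average over it. Since $\widetilde{\mathcal{M}}$ is $1$-periodic and the intervals $[a/M,(a+1)/M]$, $a=1,\dots,M$, tile $[0,1]$ modulo $1$,
\[
\int_0^{1/M}\,\sum_{a=1}^{M}\mathbf{1}\bigl\{\widetilde{\mathcal{M}}(a/M+\theta)>T\bigr\}\,d\theta\ =\ \int_0^{1}\mathbf{1}\bigl\{\widetilde{\mathcal{M}}(x)>T\bigr\}\,dx\ \geq\ c_2.
\]
The integrand on the left is a nonnegative function of $\theta$ taking values in $\{0,1,\dots,M\}$, so it attains its maximum, and that maximum must be at least $c_2M$ (otherwise the integral over $[0,1/M]$ would be strictly less than $c_2$). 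Fixing $\theta^{\ast}$ where the maximum is attained and taking $b_n:=a_n e(\sigma(n)\theta^{\ast})$ gives $|\{a\in[M]:\mathcal{M}(a)>c_1\log^{1/4}(N)\}|\geq c_2M$, as required.

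The one step genuinely requiring care is the sampling for \emph{small} $M$. For $M$ large relative to $N$ one may simply take $\theta=0$: for each $\ell$, $|\sum_{n\leq\ell}a_n e(\sigma(n)x)|^2$ is a real trigonometric polynomial whose frequencies are the differences $\sigma(m)-\sigma(n)\in\{-(N-1),\dots,N-1\}$, hence of degree $<N$, so each superlevel set $\{|\sum_{n\leq\ell}a_n e(\sigma(n)\cdot)|^2>T^2\}$ is a union of at most $N-1$ arcs; taking the union over $\ell\leq N$ shows $\{\widetilde{\mathcal{M}}>T\}$ is a union of fewer than $N^2$ arcs of total length $\geq c_2$, and counting the points $a/M$ arc by arc yields at least $c_2M-N^2$ admissible values of $a$. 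This already covers the regime relevant to Theorem \ref{mainThm}, where $M$ is a large prime factor of $p-1$ while $N$ is much smaller. It is precisely to remove any lower restriction on $M$ that the free phase $\theta^{\ast}$ is introduced (absorbed harmlessly into the coefficients), and the only thing to check there is the measurability and periodicity used in the tiling identity, both immediate since $\widetilde{\mathcal{M}}$ is a maximum of finitely many continuous $1$-periodic functions.
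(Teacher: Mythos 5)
Your argument is correct and is essentially the paper's own proof: both insert a phase $e(\sigma(n)\theta)$ into the coefficients and average the count of good sample points over $\theta\in[0,1/M]$, using periodicity to recover the measure bound from Proposition \ref{Nakata1} and pigeonholing to pick $\theta^{\ast}$. The extra digression about taking $\theta=0$ for large $M$ is unnecessary but harmless.
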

\begin{proof}Consider
\[\widetilde{\mathcal{M}_{\tau}}(x) := \max_{\ell \leq N} \left|\sum_{n=1}^{\ell} a_n e(\sigma(n)\tau ) e(\sigma(n) x)\right| = \widetilde{\mathcal{M}}(x+\tau),\]
where $\{a_n\}_{n=1}^N$ and $\sigma$ are given by Proposition \ref{Nakata1}.

It suffices to show that for some $\tau \in [0,1]$ we have
\[\left| \{a \in [M] :  \widetilde{\mathcal{M}_{\tau}}(a/M) > c_1 \log^{1/4}(N) \} \right|   \geq c_2 M \]
as the corollary then holds with $b_n = e(\sigma(n) \tau) a_n$. Let   $U:=\{x \in [0,1] :  \widetilde{\mathcal{M}}(x) > c_1 \log^{1/4}(N) \}$ be the set considered by Proposition \ref{Nakata1}. Denoting the characteristic function of $U$ as $\mathbb{I}_{U}$, we then have that
\[ \int_{0}^{1/M} \left| \{a \in [M] :  \widetilde{\mathcal{M}_{\tau}}(a/M) > c_1 \log^{1/4}(N) \} \right|d\tau \]
\[= \int_{0}^{1/M} \sum_{a=1}^{M} \mathbb{I}_{U}(a/M+\tau) d\tau = \int_{0}^{1} \mathbb{I}_{U}(x)dx  \geq c_2,\]
where the last inequality follows from Proposition \ref{Nakata1}. Thus for some $\tau_{0} \in [0,1/M]$ we must have that $ \left| \{a \in [M] :  \widetilde{\mathcal{M}_{\tau_{0}}}(a/M) > c_1 \log^{1/4}(N) \} \right| \geq c_2 M$.
\end{proof}

From these results, we obtain:

\begin{proposition}\label{mainProp}We denote the fractional part of $a \in \mathbb{R}$ by $\{a\}$. We let $p,q$ denote primes such that $q|p-1$ and $q \geq B p^{.6687}$. We let $\mathcal{A}$ denote the subgroup of order $q$ in $\Z_p^*$ (i.e. $\mathcal{A}= \left\{g^{\frac{p-1}{q}}: g \in \Z_p^*\right\}$).
There exists a universal constant $\delta >0$ such that for any $s < \delta \log^{1/2}(p)$, and any permutation $\sigma:[s]\rightarrow [s]$, there exists an $x \in \mathcal{A}$ such that
\[ \left\{\frac{x^{\sigma(1)}}{p}\right\} <   \left\{\frac{x^{\sigma(2)}}{p}\right\} < \ldots  < \left\{\frac{x^{\sigma(s)}}{p}\right\}.\]
\end{proposition}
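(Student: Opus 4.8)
The plan is to prove the proposition by an equidistribution argument: I will show that the point set
\[
P \;:=\; \Bigl\{\, \bigl(\{x/p\},\{x^{2}/p\},\dots,\{x^{s}/p\}\bigr)\;:\;x\in\mathcal{A}\,\Bigr\}\ \subset\ [0,1)^{s}
\]
is distributed uniformly enough that it must meet the open simplex $\Sigma_{\sigma}:=\{\,t\in[0,1)^{s}:t_{\sigma(1)}<t_{\sigma(2)}<\dots<t_{\sigma(s)}\,\}$, which has Lebesgue measure $1/s!$. Since $x\mapsto\{x/p\}$ is injective on $\Z_{p}^{*}$ we have $|P|=q$. The case $s=1$ is vacuous, so I assume $s\ge 2$; note that $s<\delta\log^{1/2}(p)$ forces $p$ to be large once $\delta$ is a fixed small constant, which I use freely below.

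The heart of the argument is a character sum estimate over $\mathcal{A}$. Put $d:=(p-1)/q$; by definition $\mathcal{A}$ is the image of the $d$-to-one map $y\mapsto y^{d}$ on $\Z_{p}^{*}$, so for $h=(h_{1},\dots,h_{s})\in\Z^{s}$ and $g_{h}(t):=h_{1}t+\dots+h_{s}t^{s}$ we have
\[
\sum_{x\in\mathcal{A}} e\!\left(g_{h}(x)/p\right)\;=\;\frac{1}{d}\sum_{y\in\Z_{p}^{*}} e\!\left(g_{h}(y^{d})/p\right)\;=\;\frac{1}{d}\Bigl(\,\sum_{y=0}^{p-1} e\!\left(g_{h}(y^{d})/p\right)-1\Bigr).
\]
If $h\neq 0$ and $\|h\|_{\infty}<p$, then $g_{h}(t^{d})$ is a nonconstant polynomial of degree $(\deg g_{h})\,d\le sd=s(p-1)/q\le (s/B)p^{0.3313}<p$ whose leading coefficient is not divisible by $p$, so Weil's bound (Proposition \ref{Weil}) gives $\bigl|\sum_{y=0}^{p-1} e(g_{h}(y^{d})/p)\bigr|\le (sd-1)p^{1/2}$. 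Using $qd=p-1$ and $q\ge Bp^{0.6687}$ this yields
\[
\left|\frac{1}{q}\sum_{x\in\mathcal{A}} e\!\left(g_{h}(x)/p\right)\right|\;\le\;\frac{s\,p^{1/2}}{q}\;\le\;\frac{s}{B}\,p^{-0.1687}.
\]
I expect this to be the main obstacle, and it is exactly where the hypothesis $q\ge Bp^{0.6687}$ is used: it keeps $d$ small, so that the substituted polynomial $g_{h}(y^{d})$ has degree below $p$ and Weil applies.

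Feeding this into the Erd\"{o}s--Turan--Koksma inequality (Proposition \ref{ETK}) applied to $P$ (so $N=q$) with a cutoff parameter $H$, and using the standard bound $\sum_{0<\|h\|_{\infty}\le H} r(h)^{-1}\le (3+2\log H)^{s}$, gives
\[
D_{q}(P)\;\le\;2s^{2}3^{s+1}\left(\frac{1}{H}+(3+2\log H)^{s}\,\frac{s}{B}\,p^{-0.1687}\right).
\]
Separately, I partition $[0,1)$ into $K:=2s^{2}$ equal intervals and consider the $\binom{K}{s}$ cells of the resulting grid indexed by strictly increasing tuples $1\le k_{1}<\dots<k_{s}\le K$, with the $\sigma(j)$-th coordinate constrained to the $k_{j}$-th interval. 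Each such cell lies in $\Sigma_{\sigma}$ and has volume $K^{-s}$, and by Bernoulli's inequality their (pairwise disjoint) union has measure $\binom{K}{s}K^{-s}\ge\frac{1}{2s!}$. Because $K<p$, no point of $P$ lies on a grid hyperplane $\{t_{i}=k/K\}$, so comparing point counts with volumes cell by cell gives
\[
|P\cap\Sigma_{\sigma}|\;\ge\; q\left(\frac{1}{2s!}-\binom{K}{s} D_{q}(P)\right),
\]
which is positive provided $D_{q}(P)<\tfrac{1}{2K^{s}}$.

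It remains to choose $H$ and verify $D_{q}(P)<\tfrac{1}{2K^{s}}$. Taking $H$ of order $s^{2}3^{s}(2s^{2})^{s}$ makes the first term of the discrepancy bound smaller than $\tfrac{1}{4K^{s}}$; since then $\log H=O(s\log s)$, the second term is at most $e^{O(s\log s)}p^{-0.1687}$, while $\tfrac{1}{4K^{s}}=e^{-O(s\log s)}$, so the needed inequality reduces to $s\log s\le c\log p$ for an absolute constant $c$ (depending only on $B$), together with the auxiliary "$p$ large'' conditions ($H<p$, $K<p$, $(s/B)p^{0.3313}<p$). Finally $s<\delta\log^{1/2}(p)$ gives $s\log s<\tfrac{\delta}{2}\log^{1/2}(p)\log\log(p)$, and since $\log\log(p)/\log^{1/2}(p)\le 2/e$ for all $p$, this is $\le\tfrac{\delta}{e}\log p$, which is at most $c\log p$ once $\delta$ is chosen small enough in terms of $B$; the same smallness of $\delta$ pushes $p$ past every threshold used above. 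The only real care needed is bookkeeping: keeping $H$ large enough to absorb the $1/H$ term while staying below $p$, and tracking the $e^{O(s\log s)}$ factors precisely enough to see that the saving $p^{-0.1687}$ beats them whenever $s=O(\log^{1/2}p)$.
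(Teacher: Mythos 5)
Your proof is correct and follows essentially the same route as the paper: equidistribution of the points $(\{x/p\},\dots,\{x^{s}/p\})$ over the subgroup $\mathcal{A}$ via the Erd\H{o}s--Tur\'an--Koksma inequality, with the exponential sums handled by pulling back through the $(p-1)/q$-th power map and applying Weil, leading to the same $s\ll\log^{1/2}p$ threshold. The only (cosmetic) difference is the final pigeonhole step: the paper puts a point in every box of a $(3s)^{s}$ grid and uses ``middle'' boxes to encode the ordering, while you aim directly at the union of increasing cells of a $(2s^{2})$-grid of measure $\geq 1/(2s!)$; both require discrepancy of size $e^{-O(s\log s)}$, so the quantitative outcome is identical.
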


\begin{proof} We let $g_1, \ldots, g_q$ denote the elements of $\mathcal{A}$ inside $\Z_p^*$ in the order induced by $\Z_p^*$ (that is the order of the representative integers of $\Z_p^*$ between $0$ and $p-1$). For each $i$ from 1 to $q$, we define $y_i \in [0,1]^s$ as $y_{i}= \left(\left\{\frac{g_i^{1}}{p}\right\},\left\{\frac{g_i^{2}}{p}\right\},\ldots,\left\{\frac{g_i^{s}}{p}\right\} \right)$. We then divide $[0,1]^s$ into $(3s)^s$ boxes of equal measure by dividing each coordinate into $3s$ equal intervals in the obvious way. The conclusion will now follow if we show that there exists a point $y_i$ in each of the $(3s)^s$ boxes.
To see this, consider the $3s$ intervals in each coordinate as being $s$ groups of 3 intervals each, and let $I^k_j$ denote the ``middle" interval of the $j^{th}$ group in the $k^{th}$ coordinate. Note that $I^k_j$ and $I^k_{j'}$ for $j \neq j'$ do not intersect. Given a permutation $\sigma$, it suffices to obtain a point in the box whose $k^{th}$ side is equal to $I^k_{\sigma(k)}$. (Thus, it actually suffices to show the existence of a point $y_i$ in every such ``middle" box, but we will prove the stronger statement that there is a $y_i$ in every box.)

To establish the existence of a point $y_i$ in every box, it suffices to show $D(y_1,y_2,\ldots,y_q) < (3s)^{-s}$. Invoking Proposition \ref{ETK}, we have that
\begin{equation}\label{DiscIn1}
D(y_1,y_2,\ldots,y_q) \leq 2s^2 3^{s+1} \left(\frac{1}{m} + \sum_{0 < ||h||_{\infty}\leq m} \frac{1}{r(h)} \left| \frac{1}{q} \sum_{i=1}^{q} e(\left< h, y_i \right>) \right| \right).
\end{equation}

We define
\[f_h(x) =  h_1 x^{\frac{p-1}{q}} + h_2 x^{2 \frac{p-1}{q}}  + \ldots + h_s x^{s \frac{p-1}{q}} .\]

Noting that the map $x \rightarrow x^{\frac{p-1}{q}}$ sends $\mathbb{Z}_{p}^{*}$ onto $\mathcal{A}$ with each point in the range having exactly $\frac{p-1}{q}$ preimages, we then have:
\[\frac{1}{q} \sum_{i=1}^q e(\left< h, y_i\right>)= \frac{1}{q}  \sum_{g \in \mathcal{A} } e((h_1 g + h_2g^2 + \ldots + h_s g^s) /p )  \]
\[=  \frac{1}{q} \left( \frac{q}{p-1} \sum_{x \in \mathbb{Z}_{p}^{*} } e((h_1 x^{\frac{p-1}{q}} + h_2 x^{2 \frac{p-1}{q}} + \ldots + h_s x^{s\frac{p-1}{q}}) /p ) \right)\]
\[ =  \frac{1}{p-1}\left( -1 + \sum_{x=0}^{p-1} e(f_h(x)/p) \right).\]

We note that  \[\sum_{0 < ||h||_{\infty}\leq m} \frac{1}{r(h)} = \left(1 + \sum_{j=1}^{m}j^{-1}\right)^{s} \leq C^s\log^s(m)\] for some constant $C$. Whenever some $h_i$ is not divisible by $p$, we may apply Proposition \ref{Weil} to bound the quantity $\left|\sum_{x=0}^{p-1} e(f_h(x)/p)\right|$. We will choose $m < p$ so that all $h$'s will have this property. We may thus bound the right hand side of (\ref{DiscIn1}) by
\begin{equation}\label{eq:bound}
 \leq 2s^2 3^{s+1} \left(\frac{1}{m} + s p^{-.1687} C^s\log^s(m) \right) ,
\end{equation}
when $m < p$ (for some new value of $C$). Here, we have applied Proposition $\ref{Weil}$ to polynomials of degree $\leq s\left(\frac{p-1}{q}\right)$.

For any constant $\delta_2>0$, we can set $m= s^{\delta_1 s}$ for some constant $\delta_1$ sufficiently large so that (\ref{eq:bound}) is
\begin{equation}\label{eq:bound2}
\leq s^{-\delta_2 s} +  p^{-.1687} s C^s (\delta_1 s \log(s) )^{s}.
\end{equation}
Fixing $\delta_2$ such that $s^{-\delta_2 s} \leq \frac{1}{2} (3s)^{-s}$ for all $s >1$ say (note that the Proposition is trivial for $s=1$), we may then require that $s$ satisfy $ p \geq s^{\delta_3 s} $ for $\delta_3$ sufficiently large so that $\delta_3 > \delta_1$ and the quantity in (\ref{eq:bound2}) is $< (3s)^{-s} $. We observe that $s ^{\delta_3 s} \leq p$ is equivalent to $\delta_3 s \log(s) \leq \log(p)$, which can be guaranteed by $s \leq \delta \log^{1/2}(p)$ for a suitable choice of $\delta$.
\end{proof}

\section{Proof of Theorem 1}
We define $\mathcal{S}$ to be the set of primes $p$ such that there exists a prime $q$ dividing $p-1$ with $q \geq Bp^{.6687}$. For each such $p$, we fix such a $q$. By Proposition \ref{primes}, this is an infinite set of positive relative density in the primes.
For each $p \in \mathcal{S}$, we let $\mathcal{A}$ denote the subgroup of order $q$ in $\Z_p^*$.

Our goal is to define suitable coefficients supported on $\mathcal{A}$ to show that $\Delta(p)$ is $\gg (\log\log(p))^{\frac{1}{4}}$.

We enumerate the elements of $\mathcal{A}$ in the natural way (that is so their smallest representatives in $\mathbb{Z}_{+}$ are ordered in increasing order), say $\{g_n\}_{n=1}^{q}$. Next, we let $\alpha$ be a generator of $\mathcal{A}$. We define $\nu(g_n)$ to be the element of $[q]$ such that $\alpha^{\nu(g_n)}=g_n$.
By restricting the coefficients in (\ref{DirichletMax}) to $\mathcal{A}$, we see that the quantity in (\ref{DirichletMax}) to be bounded is:
\begin{equation}\label{DirichletMax2}
\left( \frac{1}{p-1}\sum_{\chi \mod p} \max_{\ell \leq q} \left| \sum_{n=1}^{\ell} a_n \chi(g_n)\right|^2\right)^{\frac{1}{2}} = \left(\frac{1}{q} \sum_{x=1}^q  \max_{\ell \leq q} \left| \sum_{n=1}^\ell a_n e(\nu(g_n) x /q)\right|^2\right)^{\frac{1}{2}}.
\end{equation}
This follows because restricting $\chi \mod p$ to $\mathcal{A}$ yields a character on $\mathcal{A}$.

Let $ s = \lfloor \delta \log^{1/2}(p)\rfloor$ and $\sigma: [s] \rightarrow [s]$ be the permutation in Corollary \ref{NakataCor}, along with coefficients $b_1, \ldots, b_s$ such that $\sum_{m=1}^s |b_m|^2 = 1$. By Proposition \ref{mainProp}, we have a $g \in \mathcal{A}$ such that
\[ \left\{\frac{g^{\sigma(1)}}{p}\right\} <   \left\{\frac{g^{\sigma(2)}}{p}\right\} < \ldots  < \left\{\frac{g^{\sigma(s)}}{p}\right\}.\]
Of course $g^{\sigma(1)},g^{\sigma(2)},\ldots,g^{\sigma(s)} \in \mathcal{A}$. It follows that the corresponding terms in the inner sum on the right of (\ref{DirichletMax2}) appear in exactly this order. By restricting the support of our coefficients to these terms in (\ref{DirichletMax2}) and using $b_1, \ldots, b_s$ as our coefficients, it suffices to consider the quantity
\[ \left(\frac{1}{q} \sum_{x=1}^{q} \left| \max_{\ell < s} \left|\sum_{m=0}^{\ell} b_m e(\nu(g^{\sigma(m)})x/q)  \right|\right|^2 \right)^{1/2} \]
\begin{equation}\label{DirichletMax3}
 = \left(\frac{1}{q} \sum_{x=1}^{q} \left| \max_{\ell < s} \left|\sum_{m=0}^{\ell} b_m e(\sigma(m)\nu(g)x/q)  \right|\right|^2 \right)^{1/2}
\end{equation}
where we have exploited the fact that $\nu(g^{i})=i\nu(g)$ (mod p). Finally, by the change of variables $x\nu(g) \rightarrow y$, we have
\begin{equation}\label{DirichletMax4}
(\ref{DirichletMax3})= \left(\frac{1}{q} \sum_{y=1}^{q} \left| \max_{\ell \leq s} \left|\sum_{m=1}^{\ell} b_m e(\sigma(m)y/q)  \right|\right|^2 \right)^{1/2}.
\end{equation}

Applying Corollary \ref{NakataCor} with $M=q$, we see that this quantity is $\gg (\log(s))^{1/4} \gg (\log(\log(p)))^{1/4}$. This completes the proof.

\section{Remarks}\label{sec:remarks}

\begin{itemize}
  \item We remark on how to derive the $\log(N)$ upper bound from the Rademacher-Menshov theorem. Let $\{\phi_n\}_{n=1}^{N}$ denote a collection of orthonormal functions on a probability space $\mathbb{T}$. The Rademacher-Menshov theorem states that
      \[\left( \int_{\mathbb{T}} \left| \max_{\ell \leq N}  \left|\sum_{n=0}^{\ell} a_n \phi_n(x) \right| \right|^2 dx \right)^{1/2} \ll \log(N) \left( \sum_{n=0}^{\infty} |a_n|^2 \right)^{1/2},  \]
where the $\log(N)$ is known to be necessary (for some choices of orthonormal systems). Let $\mathbb{T}$ denote the probability space of the Dirichlet characters mod $N$ with normalized counting measure. Now it is easy to see that the $\phi(N)$ functions $\delta_n(\chi):=\chi(n)$ form an orthonormal system on $\mathbb{T}$. Thus the Rademacher-Menshov theorem gives $\Delta(N) \ll \log(\phi(N)) \ll \log(N)$.

   \item There is some flexibility in the techniques applied in the proof of Theorem \ref{mainThm}, and variants of these arguments should give lower bounds on $\Delta(N)$ for some more general $N$. However, a more delicate analysis will be needed to obtain a uniform lower bound in $N$.
  \item It is consistent with our knowledge that one might be able to replace the $\log^{1/4}(N)$ in Proposition \ref{Nakata1} with a $\log(N)$. This would  allow one to strengthen the conclusion of Theorem \ref{mainThm} to $\log\log(p) \ll \Delta(p)$. However, the Rademacher-Menshov theorem prevents the conclusion of Proposition \ref{Nakata1} from holding with any function growing faster than $\log(N)$. Thus a lower bound of $\log\log(N)$ would be the limitation of the approach developed here.
  \item One can interpret the proof of Theorem \ref{mainThm} as showing that the permutation of $[q]$ defined by $\nu(\cdot)$ is sufficiently pseudorandom that it contains the same increasing subsequences that could be found in a random permutation (with large probability). In connection with this interpretation, we note that Bourgain \cite{B} has shown that the $L^2$ norm of the maximal function of a randomly ordered bounded orthonormal system is at most $\log\log(N)$ (with large probability). Perhaps this is some indication that the correct bound on $\Delta(N)$ may be near $\log\log(N)$, or at least somewhat smaller than the trivial bound of $\log(N)$.
  \item Consider the following `maximal large sieve' inequality

        \[ \sum_{q \leq Q} \; \sideset{}{^*}\sum_{\chi \mod N}  \max_{\ell \leq N} \left|\sum_{\substack{n=M+1 \\ (n,q)=1 }}^{M+\ell} a_n \chi(n) \right|^2 \ll (N+Q^2) \Delta'(N) \sum_{n=M+1}^{M+N} |a_n|^2.\]

        Here $\sideset{}{^*}\sum_{\chi \mod N}$ denotes summation over primitive characters. Without the $\max_{\ell \leq N}$, one may take $\Delta'(N) \ll 1$, which is (one formulation of) the classical large sieve inequality. With the $\max_{\ell \leq N}$, Uchiyama \cite{U} obtained this with $\Delta'(N) \ll \log^2(N)$ using a Rademacher-Menshov-type argument. It is an open problem to determine if the $\log^2(N)$ factor can be reduced or eliminated. This is implicit in \cite{U} and explicitly stated in \cite{M}, see also \cite{H}. Any improvement on this problem would immediately give new bounds on the maximal Barban-Daveport-Halberstam theorem, and any improvement of the form $\Delta(N)':=o(\log(N))$ would give new bounds on the variational Barban-Davenport-Halberstam inequality \cite{L}. The latter has implications on the size of prime gaps in generic arithmetic progressions.

        Due to the averaging over moduli, it will likely be easier to obtain an improved upper bound on $\Delta'(N)$ (than $\Delta(N)$). Indeed, notice that our counterexample to $\Delta(N) \ll 1$ varies with the modulus, and thus it does not seem possible to use a similar approach to disprove $\Delta'(N) \ll 1$.
\end{itemize}

\section{Acknowledgment}The authors would like to thank the referee for a careful reading of the manuscript.

\texttt{A. Lewko, Microsoft Research}

\textit{allew@microsoft.com}
\vspace*{0.5cm}

\texttt{M. Lewko, Department of Mathematics, The University of California at Los Angeles}

\textit{mlewko@math.ucla.edu}

\end{document}